\theoremstyle{plain}
\newtheorem{theorem}{Theorem}
\newtheorem{lemma}[theorem]{Lemma}
\newtheorem*{conjecture*}{Conjecture}
\newtheorem*{question*}{Question}
\title{The interval number of a planar graph is at most three}
\author[1]{Guillaume Gu\'egan}
\author[2,3]{Kolja Knauer}%
\author[4]{Jonathan Rollin}%
\author[5]{Torsten Ueckerdt}%
\affil[1]{Universit\'e de Montpellier, CNRS, LIRMM, France}
\affil[2]{Aix Marseille Univ, Univ de Toulon, CNRS, LIS, Marseille, France}
\affil[3]{Departament de Matemàtiques i Informàtica,
Universitat de Barcelona (UB), Barcelona, Spain}
\affil[4]{FernUniversität in Hagen, Hagen, Germany}
\affil[5]{Karlsruhe Institute of Technology, Karlsruhe, Germany}
\begin{document}

\maketitle

\begin{abstract}
 The interval number of a graph $G$ is the minimum $k$ such that one can assign to each vertex of $G$ a union of $k$ intervals on the real line, such that $G$ is the intersection graph of these sets, i.e., two vertices are adjacent in $G$ if and only if the corresponding sets of intervals have non-empty intersection.  

 In 1983 Scheinerman and West [The interval number of a planar graph: Three intervals suffice. \textit{J.~Comb.~Theory, Ser.~B}, 35:224--239, 1983] proved that the interval number of any planar graph is at most $3$.
 However the original proof has a flaw.
 We give a different and shorter proof of this result.
\end{abstract}

%%%%%%%%%%%%%%%%%%%%%%%%%%%%%%%%%
%%   I N T R O D U C T I O N   %%
%%%%%%%%%%%%%%%%%%%%%%%%%%%%%%%%%
\section{Introduction}

For a positive integer $k$, a \emph{$k$-interval representation} of a graph $G = (V,E)$ is a set $\{f(v) \mid v \in V\}$ where $f(v)$ is the union of at most $k$ intervals on the real line representing vertex $v\in V$, such that $uv$ ($u \neq v$) is an edge of $G$ if and only if $f(u)\cap f(v) \neq \emptyset$.
In 1979 Trotter and Harary~\cite{Tro-79} introduced the \emph{interval number} of $G$, denoted by $i(G)$, as the smallest $k$ such that $G$ has a $k$-interval representation.
In 1983 Scheinerman and West~\cite{Sch-83} constructed planar graphs with interval number at least~$3$ and proposed a proof for the following:

\begin{theorem}[Scheinerman, West~\cite{Sch-83}]\label{thm:planar-3-intervals}
 If $G$ is planar, then $i(G) \leq 3$. 
\end{theorem}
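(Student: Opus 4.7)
The natural strategy is induction on $|V(G)|$, relying on the fact that every planar graph has a vertex of degree at most~$5$. Pick such a vertex $v$, apply the inductive hypothesis to $G-v$ to obtain a $3$-interval representation, and then extend it to $G$ by producing at most three new intervals whose union represents $v$. Concretely, one seeks three points $p_1, p_2, p_3$ on the line such that the set of vertices whose intervals contain at least one $p_k$ is exactly the neighborhood $N(v)$; three tiny intervals around the $p_k$'s then form the set $f(v)$.

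The difficulty, and almost certainly the point where the original proof breaks down, is the extension step: a representation of $G-v$ obtained by induction need not possess three such points, as the intervals of $N(v)$ can be intertwined with those of non-neighbors. To rescue the argument I would strengthen the inductive statement to maintain a structural invariant on the representation. For instance, I would fix a planar embedding of $G$ and require that for each face $F$, the boundary vertices of $F$ possess intervals that respect the cyclic order on $F$, with some ``free'' endpoints extending far to the right. With this invariant in hand, by choosing $v$ on the outer face, its at most five neighbors lie consecutively along the outer face of $G-v$, and the required three points $p_k$ can be placed to split these neighbors into at most three consecutive arcs.

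The main obstacle is selecting an invariant that is both strong enough to force the extension step to succeed and weak enough to be preservable by the induction. Once the invariant is fixed, the remainder of the proof is essentially a finite case analysis on the configurations of $v$'s neighbors around the face it lies on, parameterized by $\deg(v) \in \{1,\dots,5\}$ and by the adjacencies among the neighbors. A preliminary reduction to, say, $3$-connected planar triangulations could simplify the case analysis, provided it can be carried out without increasing the interval number --- which itself requires care since $i(\cdot)$ is not monotone under edge addition, so one must either prove the stronger statement directly for triangulations and specialize, or perform the reduction via operations (such as choosing $v$ carefully after triangulating) that are compatible with the invariant.
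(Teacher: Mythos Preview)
Your proposal is not a proof but a proof \emph{strategy}, and the strategy you outline is precisely the one that Scheinerman and West attempted and that this paper shows to be flawed. You correctly identify the crux --- finding an invariant on the representation that is simultaneously strong enough to guarantee the extension step and weak enough to be preserved --- but you do not supply such an invariant, and Section~\ref{sec:old} of the paper is devoted to explaining why this is genuinely hard: Scheinerman and West's elaborate ``P-special'' invariant (displayed vertices, broken ends, $z^*_H$-reusable endpoints, etc.) fails in a subtle case, and natural attempts to repair it fail as well. Your vaguer suggestion (respecting cyclic face orders, free endpoints to the right) is much weaker than their invariant and would certainly not suffice; the ``finite case analysis'' you anticipate is exactly where their argument collapses.

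The paper's proof avoids this trap entirely. Rather than removing a single low-degree vertex, it reduces to triangulations, finds a non-empty separating triangle $\Delta$ with inclusion-minimal interior, removes \emph{all} vertices $X$ inside $\Delta$ at once, and inducts on $G-X$. The subgraph $G_{\rm in}$ induced on $V(\Delta)\cup X$ is then $4$-connected, and the key new ingredient is Lemma~\ref{lem:inner-decomposition}: the inner edges of a $4$-connected triangulation decompose into a Hamiltonian path and two specific forests. This structural decomposition does the heavy lifting, so the invariants carried through the induction can be very light --- just that every vertex is displayed and every inner face has a displayed edge. The displayed edge of $\Delta$ and the path/forest decomposition are exactly what is needed to lay out all of $X$ with three intervals per vertex. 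In short: you propose to control the representation tightly and remove one vertex at a time; the paper controls the representation loosely, removes a $4$-connected chunk at a time, and compensates with a strong graph-theoretic decomposition of that chunk.
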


In this paper we point out a flaw in that proof and give an alternative proof of Theorem~\ref{thm:planar-3-intervals}.
The original proof is based on induction, maintaining a very comprehensive description of the (partial) representation and its properties.
However in certain cases, one of the required properties can not be maintained during the construction.
A precise description of the problematic case is given in Section~\ref{sec:old}.
Let us also remark that in his thesis~\cite{Sch-84}, Scheinerman proposes a slightly different argumentation for the considered case which also leads to the same problem.

In the remaining part of the introduction we discuss some related work.
In Section~\ref{sec:prel} we introduce the notions used in the new proof, which is then given in Section~\ref{sec:new}.
We conclude with final remarks in Section~\ref{sec:conclusion}.

\paragraph{Related work.}

A graph with $i(G)\leq 1$ is called an \emph{interval graph}.
A concept closely related to the interval number is the \emph{track number} of $G$, denoted by $t(G)$, which is the smallest $k$ such that $G$ is the union of $k$ interval graphs.
Equivalently, $t(G)$ is the smallest $k$ such that $G$ admits a $k$-interval representation that is the union of $k$ $1$-interval representations, each on a different copy of the real line (called a track) and each containing one interval per vertex.
More recently, Knauer and Ueckerdt~\cite{Kna-16} defined the \emph{local track number} of $G$, denoted by $t_\ell(G)$, to be the smallest $k$ such that $G$ is the union of $d$ interval graphs, for some $d$, where every vertex of $G$ is contained in at most $k$ of them.
It is easy to see that for every graph $G$ we have $i(G) \leq t_\ell(G) \leq t(G)$.

Gon\c{c}alves~\cite{Gon-07} proved that the track number of planar graphs is at most~$4$, which is best-possible~\cite{Gon-09}.
It is an open problem whether there is a planar graph with local track number~$4$~\cite{Kna-16}.
Balogh~\textit{et al.}~\cite{Bal-04} show that any planar graph of maximum degree at most~$4$ has interval number at most~$2$.
For further recent results about interval numbers of other classes of planar graphs and general graphs, let us refer to~\cite{Bal-99} and~\cite{deQ-16}, respectively.

Finally, let us mention that West and Shmoys~\cite{Wes-84} proved that deciding $i(G) \leq k$ is NP-complete for each $k \geq 2$, while Jiang~\cite{Jia-13} proved the same for $t(G) \leq k$, and Stumpf~\cite{Stu-15} for $t_\ell(G) \leq k$.

% %\subsubsection*{Structure of the paper:}
% \paragraph{Organization of the paper.}
% 
% We give some preliminary definitions in Section~\ref{sec:prel}.
% In Section~\ref{sec:old} we present parts of the argument of Scheinerman and West. Section~\ref{sec:new} is dedicated to the new proof of Theorem~\ref{thm:planar-3-intervals}.
% %We finish the paper by recalling an open question about conclude the paper in Section~\ref{sec:conclude}.

%%%%%%%%%%%%%%%%%%%%%%%%%%%%%%%%%%%
%%   P R E L I M I N A R I E S   %%
%%%%%%%%%%%%%%%%%%%%%%%%%%%%%%%%%%%
\section{Preliminaries}\label{sec:prel}

All graphs considered here are finite, simple, undirected, and non-empty.
If $G = (V,E)$ is a graph and $\{f(v) \mid v \in V\}$ is a $k$-interval representation of $G$ for some $k$, we say that a subset $S$ of the real line is \emph{intersected} by some $f(v)$ if $S \cap f(v) \neq \emptyset$.
For a point $p$ with $p \in f(v)$, we also say that $p$ is \emph{covered} by $f(v)$.
A vertex $v$ has a \emph{broken end} $b$ if $b$ is an endpoint of an interval in $f(v)$ and $b$ is not covered by any other $f(w)$ for $w\in V - \{v\}$.
We say that a vertex $v$ is \emph{displayed} if $f(v)$ contains a \emph{portion} (i.e., a non-empty open interval) not intersected by any other $f(w)$ for $w\in V - \{v\}$.
An edge $uv\in E$ is \emph{displayed} if $f(u)\cap f(v)$ contains a portion not intersected by any other $f(w)$ for $w \in V - \{u,v\}$.

The \emph{depth} of the representation $\{f(v) \mid v \in V\}$ is the largest integer $d$ such that some point of the real line is covered by at least $d$ sets $f(v)$ with $v \in V$.
For $d \geq 1$, the \emph{depth~$d$ interval number} of $G$, denoted by $i_d(G)$, is the smallest $k$ such that $G$ admits a $k$-interval representation of depth~$d$.
Scheinerman and West~\cite{Sch-83} proved that there are planar graphs $G$ with $i_2(G) \geq 4$, while Gon\c{c}alves~\cite{Gon-07} proved that $i_2(G) \leq 4$ for all planar graphs $G$.
In Theorem~\ref{thm:4-connected-case} we obtain that for any $4$-connected planar graph $G$ it holds $i_2(G) \leq 3$. Finally, we shall prove here that $i_3(G) \leq 3$ for all planar graphs $G$, which is also the original claim of Scheinerman and West.

%%%%%%%%%%%%%%%%%%%%%%%%%%%%%%%%%%%
%%   T H E   N E W   P R O O F   %%
%%%%%%%%%%%%%%%%%%%%%%%%%%%%%%%%%%%
\section{An alternative proof of Theorem~\ref{thm:planar-3-intervals}}\label{sec:new}

In this section we give a new proof for Theorem~\ref{thm:planar-3-intervals}, i.e., that every planar graph $G$ has interval number at most~$3$.
A \emph{triangulation} is a plane embedded graph in which every face is bounded by a triangle.
In other words, a triangulation is a maximally planar graph on at least three vertices with a fixed plane embedding.
As every planar graph is an induced subgraph of some triangulation (For example, iteratively adding a new vertex to a non-triangular face together with one edge to each of the face's incident vertices, eventually results in such a triangulation.) and the interval number is monotone under taking induced subgraphs, we may assume without loss of generality that $G$ is a triangulation.

A triangle in $G$ is \emph{non-empty} if its interior contains at least one vertex of $G$.
We shall construct a $3$-interval representation of $G$ by recursively splitting $G$ along its non-empty triangles.
%; a decomposition technique that has often been used, e.g., in~\cite{Whi-31,Cha-09,Cha-12}.
This leaves us with the task to represent $4$-connected triangulations, i.e., triangulations whose only non-empty triangle is the outer triangle, and to ``glue'' those representation along the non-empty triangles of $G$.
More precisely, we shall roughly proceed as follows:
\begin{enumerate}[label = \textbf{(\Roman*)}]
 \item Consider a non-empty triangle $\Delta$ with inclusion-minimal interior and the set $X$ of all vertices in its interior.
 
 \item Call induction on the graph $G_1 = G - X$, obtaining a $3$-interval representation $f_1$ of $G_1$ with additional properties on how inner faces are represented.
 
 \item Since the subgraph $G_2$ of $G$ induced by $V(\Delta) \cup X$ is $4$-connected, we can utilize a recent result of the second and fourth author to decompose $G_2$ into a path and two forests.\label{enum:step-decompose}
 
 \item Using this decomposition, we define a $3$-interval representation $f_2$ of $G_2$ that coincides with $f_1$ on $V(\Delta)$.\label{enum:step-glue}
\end{enumerate}

We remark that the essentials of the construction in step~\ref{enum:step-glue} can be already found in~\cite{Axe-13}.
Also, the decomposition of a triangulation along its non-empty triangles is a common method in the field of intersection graphs; see e.g.,~\cite{Tho-86,Cha-09,Cha-12}.
Hence the key to our new proof is the most recent~\cite{Kna-17} decomposition used in step~\ref{enum:step-decompose}, which we shall state next.

Let $G = (V,E)$ be a $4$-connected triangulation with outer vertices $x,y,z$.
We denote by $u_x$ the unique inner vertex of $G$ adjacent to $y$ and $z$ (if there were several, $G$ would not be $4$-connected), and call the vertex $u_x$ the vertex \emph{opposing $x$}.
Similarly, the vertex $u_y$ opposing $y$ and the vertex $u_z$ opposing $z$ are defined.
Note that since $G$ is $4$-connected we have that $u_x,u_y,u_z$ all coincide if $|V| = 4$, and are pairwise distinct if $|V| \geq 5$.
We use the following recent result of the second and fourth author (Lemma 3.1 in~\cite{Kna-17}), which we have adapted here to the case of $4$-connected triangulations (see Figure~\ref{fig:new-proof} for an illustration):

\begin{lemma}[Knauer, Ueckerdt~\cite{Kna-17}]\label{lem:inner-decomposition}
 Let $G$ be a plane $4$-connected triangulation with outer triangle $\Delta_{\rm out} = x,y,z$ and corresponding opposing vertices $u_x,u_y,u_z$.
 Then the inner edges of $G$ can be partitioned into three forests $F_x,F_y,F_z$ such that
 \begin{itemize}
  \item $F_x$ is a Hamiltonian path of $G \setminus\{y,z\}$ going from $x$ to $u_x$,
  
  \item $F_y$ is a spanning tree of $G \setminus\{x,z\}$,
  
  \item $F_z$ is a spanning forest of $G \setminus\{y\}$ consisting of two trees, one containing $x$ and one containing $z$, unless ${G}\cong K_4$. In this case $F_z=zu_z$.
 \end{itemize}
\end{lemma}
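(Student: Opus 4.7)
The natural strategy is induction on $n = |V(G)|$. The base case $n = 4$ is forced: $G \cong K_4$, the three opposing vertices coincide at the unique inner vertex $u$, and the three inner edges $xu, yu, zu$ partition as $F_x=\{xu\}$, $F_y=\{yu\}$, $F_z=\{zu\}$, which is the exceptional situation explicitly allowed in the statement.

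For the inductive step ($n \geq 5$), the plan is to reduce $G$ to a smaller $4$-connected triangulation $G'$ and then extend the inductive decomposition back to $G$. Since $u_x, u_y, u_z$ are pairwise distinct whenever $n \geq 5$, natural candidates for reduction are either removing an inner vertex $v \notin \{u_x, u_y, u_z\}$ whose removal can be repaired by a local re-triangulation, or contracting an inner edge whose contraction preserves both simplicity and $4$-connectivity; standard results on $4$-connected planar triangulations guarantee such reductions exist. An alternative, non-inductive route is to build the three forests directly from a $4$-canonical ordering of $G$ (in the spirit of Kant--He) arranged so that the first vertex is $x$ and the last inner vertex is $u_x$: the induced linear traversal is used to read off the Hamiltonian path $F_x$ in $G \setminus \{y,z\}$, and the remaining inner edges split naturally between $F_y$ and $F_z$ according to whether they lie on the $y$-side or the $z$-side of the path. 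A preliminary Euler-formula check confirms the edge counts are consistent: a triangulation has $3n-9$ inner edges, matching $|F_x|+|F_y|+|F_z| = (n-3)+(n-3)+(n-3)$.

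The main obstacle I anticipate is the Hamiltonian path requirement on $F_x$. Unlike the spanning tree $F_y$ or the two-tree forest $F_z$, both of which admit considerable local flexibility, a Hamiltonian path with prescribed endpoints leaves almost no slack: each inner vertex other than $x$ and $u_x$ must be incident to exactly two edges of $F_x$, and these must globally assemble into a single path from $x$ to $u_x$ in $G \setminus \{y,z\}$. Making the inductive step (or the canonical construction) respect this rigidity — while simultaneously forcing $F_z$ to split into exactly two components, one rooted at $x$ and one at $z$ — is where the argument needs to be most delicate, and is the place where the precise structural properties of $4$-connected triangulations (as opposed to merely $3$-connected ones) should be invoked.
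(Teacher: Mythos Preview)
The paper does not prove this lemma; it is quoted from~\cite{Kna-17} (Lemma~3.1 there, adapted to the $4$-connected setting), so there is no in-paper argument to compare your proposal against.

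On its own terms, what you have written is a strategy outline rather than a proof, and you yourself flag the genuine gap without closing it. The Euler count and the base case are fine, but neither of your two proposed routes is carried through. For the inductive route, even granting a ``standard'' contractible edge or deletable vertex in a $4$-connected triangulation, you give no mechanism for extending the Hamiltonian path $F_x'$ of $G'\setminus\{y,z\}$ to a Hamiltonian path of $G\setminus\{y,z\}$ with the prescribed endpoints; a contraction that is not along $F_x'$, or a deletion whose neighbourhood does not meet $F_x'$ in two consecutive vertices, will in general not permit this repair. For the canonical-ordering route, a $4$-canonical ordering of a $4$-connected triangulation does not by itself single out a Hamiltonian path from $x$ to $u_x$: each newly placed vertex has at least two lower neighbours, and selecting one predecessor per vertex yields a spanning tree, not a path, unless additional structure is enforced. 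Similarly, the claim that the remaining inner edges split into a spanning tree $F_y$ of $G\setminus\{x,z\}$ and a two-component forest $F_z$ with the components rooted at $x$ and $z$ is precisely the content of the lemma and cannot be read off from a generic ordering. In short, the obstacle you correctly isolate in your last paragraph is the entire substance of the result, and the proposal does not yet contain an idea for overcoming it.
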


\begin{figure}[tb]
 \centering
 \includegraphics{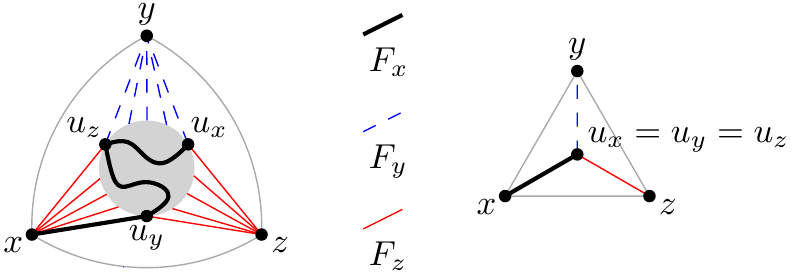}
 \caption{The decomposition in Lemma~\ref{lem:inner-decomposition} for the case $G \not\cong K_4$ (left) and $G \cong K_4$ (right).}
 \label{fig:new-proof}
\end{figure}

Note that the conditions on the decomposition in Lemma~\ref{lem:inner-decomposition} imply that the edge $xu_y$ is in $F_x$ and the edge $zu_y$ is in $F_z$ (since $x$ and $z$ are in different components of $F_z$ and $z$ is not in $F_x$).
We are now ready to give our new proof of Theorem~\ref{thm:planar-3-intervals}.

\begin{proof}[Proof of Theorem~\ref{thm:planar-3-intervals}]
 We have to show that every planar graph $G$ admits a $3$-interval representation of depth at most~$3$, in particular that $i(G) \leq 3$.
 As every planar graph is an induced subgraph of some planar triangulation and the interval number is monotone under taking induced subgraphs, we may assume without loss of generality that $G$ is a triangulation.
 
 We proceed by induction on the number $n$ of vertices in $G$, showing that $G$ admits a $3$-interval representation of depth~$3$ with the additional invariant that \textbf{(I1)} every vertex is displayed, and \textbf{(I2)} every inner face contains at least one displayed edge.
 
 The base case is $n = 3$, i.e., $G$ is a triangle with vertices $x,y,z$.
 In this case, it is easy to define a $3$-interval representation of $G$ with invariants \textbf{(I1)} and \textbf{(I2)}.
 For example, take $f(x) := [0,3]$, $f(y) := [1,4] \cup [6,7]$, and $f(z) := [2,5]$, and note that edges $xy$ and $yz$ are displayed.
 
 \medskip
 
 Now assume that $n \geq 4$, i.e., $G$ contains at least one non-empty triangle.
 Let $\Delta$ be a non-empty triangle in $G$ with inclusion-minimal interior among all non-empty triangles in $G$.
 Let $X \subset V$ be the (non-empty) set of vertices in the interior of $\Delta$, $G_{\rm out} = G - X$ be the triangulation induced by $V - X$, and $G_{\rm in}$ be the $4$-connected triangulation induced by $V(\Delta) \cup X$.
 By induction hypothesis there exists a $3$-interval representation $\{f(v) \mid v \in V - X\}$ of $G_{\rm out}$ of depth~$3$ satisfying the invariant \textbf{(I1)} and \textbf{(I2)}.
 In particular, the three vertices $x,y,z$ of $\Delta$, which now form an inner face of $G_{\rm out}$, are already assigned to up to three intervals each.
 We shall now extend this representation to the vertices in $X$.
  
 By invariant \textbf{(I2)} at least one edge of $\Delta$, say $xz$, is a displayed edge.
 Consider the vertices $u_x,u_y,u_z$ opposing $x,y,z$ in $G_{\rm in}$, respectively, and let $F_x$, $F_y$, $F_z$ be the decomposition of the inner edges of $G_{\rm in}$ given by Lemma~\ref{lem:inner-decomposition} (note that we can choose $x$, $y$, $z$ arbitrarily in Lemma~\ref{lem:inner-decomposition}).
 For convenience, if $G_{\rm in} \cong K_4$, let $F_z$ additionally have vertex $x$ as a one-vertex component.
 We define, based on this decomposition, three intervals for each vertex $v \in X$ as follows.
 
 \begin{itemize}
  \item Represent the path $F_x - \{x\}$ on an unused portion of the real line by using one interval per vertex, and in such a way that every vertex and every edge is displayed.
  Let this representation be denoted by $\{f_1(v) \mid v \in X\}$.
  
  \item Consider $y$ to be the root of the tree $F_y$, and for each vertex $v$ in $F_y - \{y\}$ consider the parent $w$ of $v$ in $F_y$, i.e., the neighbor of $v$ on the $v$-to-$y$ path in $F_y$.
  Create a new interval for $v$ strictly inside the displayed portion of $w$.
  If $w = y$, such a portion exists in $f(y)$ as \textbf{(I1)} holds for $f$, and if $w \neq y$, such a portion exists in $f_1(w)$.
  Let these new intervals be denoted by $\{f_2(v) \mid v \in X\}$.
  
  \item Consider the three trees that are the components of $F_z - \{u_yz\}$, that is, of $F_z$ after removing the edge $u_yz$.
  Say $T_1$ contains vertex $u_y$ (and possibly no other vertex), $T_2$ contains $z$, and $T_3$ contains $x$.
  Consider $u_y$, respectively $z$ and $x$, to be the root of $T_1$, respectively $T_2$ and $T_3$.
  For each vertex $v$ in $T_1 - \{u_y\}$, respectively $T_2 - \{z\}$ and $T_3 - \{x\}$, consider the parent $w$ of $v$ in $T_1$, respectively $T_2$ and $T_3$, and create a new interval for $v$ in the displayed portion of $w$.
  (Again, if $w \in \{x,z\}$, such a portion exists in $f(w)$ as \textbf{(I1)} holds for $f$, and if $w \notin \{x,z\}$, such a portion exists in $f_1(w)$.)
  Let these new intervals be denoted by $\{f_3(v) \mid v \in X - \{u_y\}\}$.
  
  \item Finally, create a new interval $f_3(u_y)$ in the displayed portion of edge $xz$ (to represent $xu_y$ and $zu_y$).
 \end{itemize}

 Defining $f(v) = f_1(v) \cup f_2(v) \cup f_3(v)$ for each $v \in X$, it is straightforward to check that $\{f(v) \mid v \in V\}$ is a $3$-interval representation of $G = G_{\rm out} \cup G_{\rm in}$ of depth~$3$.
 Moreover, the edge $xz$, every inner edge of $G_{\rm in}$ except for $u_yx$ and $u_yz$, and every vertex of $X$ is displayed. Hence this representation satisfies our invariants \textbf{(I1)} and \textbf{(I2)}, which concludes the proof.
\end{proof}

%%%%%%%%%%%%%%%%%%%%%%%%%%%%%%%%%%%
%%   T H E   O L D   P R O O F   %%
%%%%%%%%%%%%%%%%%%%%%%%%%%%%%%%%%%%
\section{The approach of Scheinerman and West}\label{sec:old}

To explain the proof strategy of Scheinerman and West, we introduce some more terminology from their paper.
For a plane embedded graph $G$ let $G^0$ denote the (outerplanar) subgraph induced by its \emph{external vertices}, i.e., those lying on the unbounded, outer face.
The edges of $G^0$ that bound the outer face are the \emph{external edges}, while those that bound two inner faces are the \emph{chords}.
The graph $G^0$ is considered together with its decomposition into its inclusion-maximal $2$-connected subgraphs, called blocks.
%, $H_1, \ldots, H_k$.
Furthermore, we fix for each component of $G^0$ a non-cut-vertex $z$ as the root of that component and say that $z = z_H$ is the root of its corresponding block $H$.
For each remaining block $H$ of $G^0$ let $z_H$ be the cut-vertex of $G^0$ in $H$ that is closest to the root of the corresponding component.
For a chord $xy$ of $G^0$ contained in block $H$ with root $z_H$ and a third vertex $u \neq x,y$ of $H$, say that $u$ is \emph{on the $z^*_H$-side} of $xy$ if $u$ and $z_H$ lie in different connected components of $H - \{x,y\}$. 
% 
% in $H_1$ and root the tree of blocks in $H_1$.
% Now assign to any block $H_i$ the cut-vertex $z_{H_i}$ towards its father block.
% These special vertices are called the \emph{roots} of the blocks.

The argument of Scheinerman and West proceeds by induction on the number of vertices in $G$ with a stronger induction hypothesis on how $G^0$, i.e., the subgraph of $G$ induced by the external vertices, is represented.
% In order to be prepared for the insertion of future vertices in the unbounded face,
Every edge $xy$ of $G^0$ shall be represented in such a way that for a possible future vertex $v$ which is adjacent to $x$ and $y$ we can define only one interval $I$ that has at least two of the following three properties:
\[
\textbf{P1: } I \cap f(x) \neq \emptyset, \qquad \textbf{P2: } I \cap f(y) \neq \emptyset, \qquad \textbf{P3: } I \text{ is displayed}.
\]
If $xy$ is displayed, it is easy to find an interval $I$ for $v$ having properties \textbf{P1} and \textbf{P2}.
Moreover if $x$, respectively $y$, has a broken end, it is easy to satisfy \textbf{P1}, respectively \textbf{P2}, and \textbf{P3}.
However if $xy$ is not displayed and neither $x$ nor $y$ has a broken end, Scheinerman and West propose to alter the existing representation of another vertex $u$ whose interval covers an endpoint of $x$.
As their modification, which is depicted in Figure~\ref{fig:reusable-end}, splits an interval of $u$ into two, it is necessary that $u$ appears at most twice so far, meaning that $f(u)$ consists of at most two intervals.
This shall be achieved by specifying the current representation of $G$ quite precisely.

\begin{figure}[tb]
	\centering
	\includegraphics[width=\textwidth]{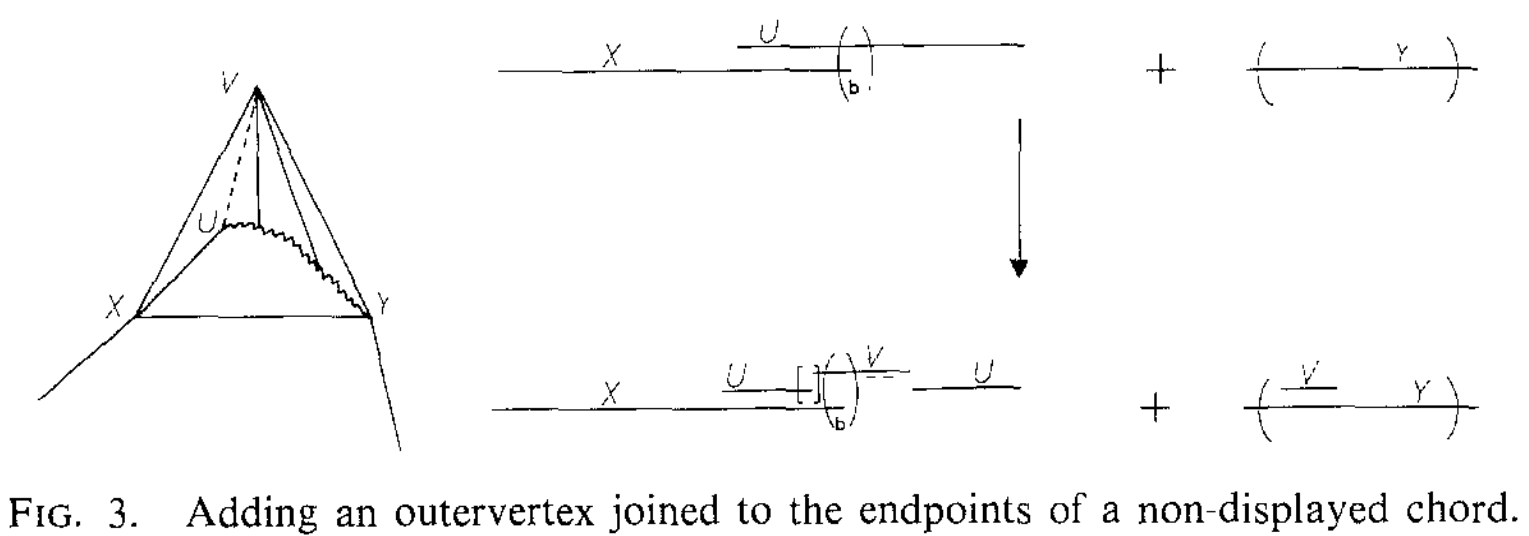}
	\caption{Figure~3 from~\cite{Sch-83} illustrating how to use a $z^*_H$-reusable endpoint $b$ of $f(x)$, which is covered by some vertex $u$ and assigned to a chord $xy$ of $H$, in order to insert a new vertex $v$ that is adjacent to $x$ and $y$.
		Note that the interval for $u$ is split into two.}
	\label{fig:reusable-end}
\end{figure}

Given a rooted plane embedded graph $G = (V,E)$, i.e., with a fixed root $z_H$ for each block $H$ of $G^0$, a representation $\{f(v) \mid v \in V\}$ is called \emph{P-special} if every vertex is displayed and each of the following holds:

\begin{enumerate}[label = (\arabic*)]
	\item Each root is represented by one interval and every other external vertex is represented by at most two intervals.
	%  \item Every edge of $H$ containing $z_H$ is displayed, where $H$ is any block of $G^0$.
	\item For each block $H$ of $G^0$ all edges incident to $z_H$ are displayed.
	\item Each non-displayed edge $xy$ of any block $H$ of $G^0$ is assigned to an endpoint $b$ of $f(x)$ or $f(y)$, say $f(x)$, such that the following hold.
	\begin{enumerate}[label = (\arabic{enumi}.\arabic*)]
		\item If $xy$ is an external edge, the endpoint $b$ is a broken end.
		
		\item If $xy$ is a chord, the endpoint $b$ is a broken end, or $b$ is covered by $f(u)$ for only one other vertex $u$, where additionally $u$ is on the $z^*_H$-side of $xy$ and $ux$ is an \underline{external} edge of $G^0$.
		An endpoint $b$ satisfying this condition is called \emph{$z^*_H$-reusable} for edge $xy$.
		\label{enum:reusable}
		
		\item For each endpoint $b$ there is at most one edge assigned to $b$.
		
		\item Each vertex covers at most one endpoint which has an edge assigned.
	\end{enumerate}  
\end{enumerate}

Now Scheinerman and West propose to show by induction on the number of vertices in $G$ that every rooted plane embedded graph $G$ admits a P-special $3$-interval representation.
They remove a small set of carefully chosen external vertices from $G$, induct on the smaller instance to obtain a P-special representation, create intervals for the removed vertex/vertices, extend and/or alter the existing representation, and argue that the result is a P-special $3$-interval representation of~$G$.

The problem lies in the last step.
Some removed vertex $v$ may cover an edge $ux$ that is external in $G - v$, but a chord in $G$.
But it may be that some non-displayed chord $xy$ of $G - v$ is assigned to an endpoint $b$ of $f(x)$ that is $z^*_H$-reusable and covered by $u$.
This assignment cannot be kept, since $ux$ is no longer external, which however is required for $z^*_H$-reusability as it is defined in~\ref{enum:reusable} above. Thus, the invariants of the induction cannot be maintained.

\medskip

Let us explain in more detail below (discussing only the relevant cases) on basis of a small example graph that the above problem can indeed occur, and why some straightforward attempt to fix it does not work.
To this end, consider the plane embedded $9$-vertex graph $G$ in the top-left of Figure~\ref{fig:counterexample-decompose}.
The thick edges highlight the subgraph $G^0$, which has only one block $H$, and let us pick vertex $x_1$ to be the root of that block, i.e., $z_H = x_1$.
The base case of the inductive construction is an independent set of vertices.
Otherwise, we identify a set $X$ of external vertices that we want to remove as follows.
Consider a leaf-block $H$ of $G^0$, i.e., one that contains no root of another block, its root $z_H$, and distinguish the following cases.

\begin{figure}[tb]
	\centering
	\includegraphics{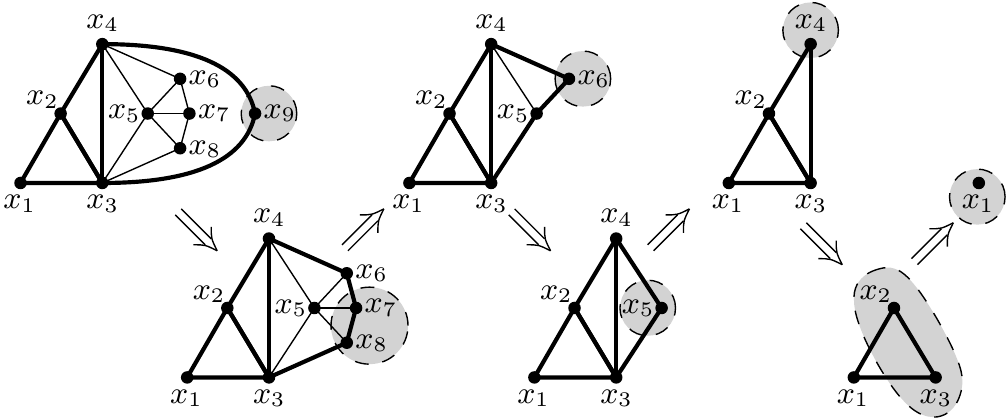}
	\caption{A plane embedded $9$-vertex graph $G$ with one block $H$ and root $z_H = x_1$, and how it is reduced to an independent set according to the induction rules in~\cite{Sch-83}.
		In each of the six steps, the set $X$ is highlighted and the smaller graph is obtained by removing all vertices in $X$.}
	\label{fig:counterexample-decompose}
\end{figure}

\begin{description}
	\item[Case~I: Every chord of $H$ is incident to $z_H$.]
	
	In this case label the vertices of $H$ as $z_H,v_1,\ldots,v_k$, $k \geq 1$, in this cyclic order around the outer face of $H$.
	If $k \geq 2$, let $X = \{v_1,\ldots,v_p\}$, where $p$ is the smallest integer greater than or equal to $2$ for which $v_p$ has degree~$2$ in $H$.
	
	\item[Case~II: Some chord of $H$ is not incident to $z_H$.]
	
	Consider an inner face $C$ of $H$ that does not contain $z_H$ and is bounded by only one chord $xy$, and distinguish further.
	
	\begin{description}
		\item[Case~IIa: $C$ is a triangle.]
		
		Let $X = \{v\}$, where $v \neq x,y$ is the third vertex in $C$.
		
		\item[Case~IIb: $C$ has at least four vertices.]
		
		In this case label the vertices of $C$ as $x,v_1,\ldots,v_k,y$, $k \geq 2$, in this cyclic order around $C$, and let $X = \{v_1,v_2\}$.
	\end{description}
\end{description}

Figure~\ref{fig:counterexample-decompose} shows how our example graph $G$ is reduced to an independent set according to these rules. 
Next, we consider these steps in reverse order and construct an interval representation following the case distinction as proposed in~\cite{Sch-83}.
Let us refer to Figure~\ref{fig:counterexample} for a step-by-step illustration of this construction.
The problem arises in Step~5.

\begin{description}
	\item[Step 1: base case.]
	
	Represent $x_1$ with a single interval.
	
	\item[Step 2: Case~I, $X = \{x_2,x_3\}$.]
	
	Insert intervals for $x_1,x_2$ into the displayed interval for $x_1 = z_H$.
	Place overlapping, displayed intervals for $x_1,x_2$ in an unused portion of the real line.
	
	\item[Step 3: Case~IIa, $X = \{x_4\}$.]
	
	As the chord $xy = x_2x_3$ is displayed, called subcase (1), add a displayed interval for $v = x_4$ and place the second interval available for $v$ in the displayed portion for $xy$.
	Assign one broken end of $v$ to each of the external edges $xv=x_2x_4$ and $yv=x_3x_4$.
	
	\item[Step 4: Case~IIa, $X = \{x_5\}$.]
	
	As the chord $xy = x_3x_4$ is not displayed, but assigned to a broken end $b$ of $x = x_4$, called subcase (2), add the displayed interval for $v = x_5$ so that it overlaps $f(x)$ at $b$.
	Add a second interval for $v$ in the displayed portion of $f(y)$.
	Assign the chord $xy$ to the $z^*_H$-reusable endpoint $b$, which is here covered by $u = x_5$ with $ux$ being an external edge.
	
	\item[Step 5: Case~IIa, $X = \{x_6\}$.]
	
	As the chord $xy = x_4x_5$ is displayed, we are in subcase (1) again.
	We add a displayed interval for $v = x_6$ and place a second interval for $v$ in the displayed portion for $xy$.
	Now the problem is that the chord $x_3x_4$ can no longer be assigned to the endpoint $b$, since the edge $x_4x_5$ is no longer external.
	
	One might be tempted to assume that the problem can be fixed by relaxing the definition of $z^*_H$-reusability as follows.
	Instead of $ux$ being external, maybe it suffices to require that $u$ appears at most twice so far.
	Let us call this new concept \emph{almost $z^*_H$-reusability}.
	However, the assumption that $ux$ is external is crucial for Case~IIb, as we will demonstrate in Steps 6 and 7 below.
	
	\begin{figure}[tbp]
		\centering
		\includegraphics{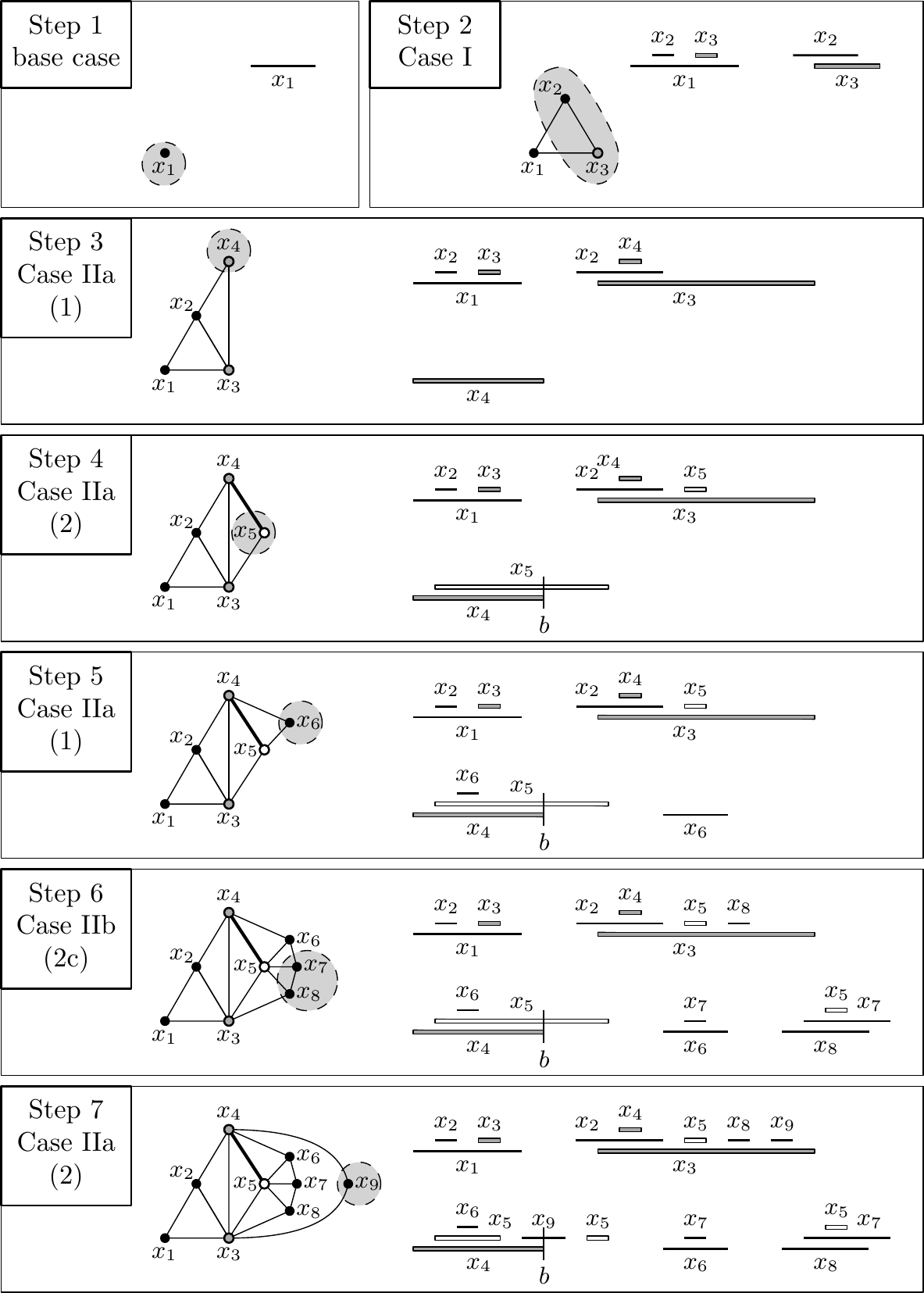}
		\caption{The steps of the inductive construction in~\cite{Sch-83} applied to the graph in Figure~\ref{fig:counterexample-decompose}.
			Note that after Step~5 the chord $x_3x_4$ is not displayed and the right endpoint of the longer interval for $x_4$ is not $z^*_H$-reusable since edge $x_4x_5$ is not external.
			Further note that after Step~7 vertex $x_5$ is represented by four intervals.}
		\label{fig:counterexample}
	\end{figure}
	
	\item[Step 6: Case~IIb, $X = \{x_7,x_8\}$.]
	
	Assign $v_1 = x_8$ an interval in the displayed interval for $x = x_3$, and assign $v_2 = x_7$ an interval in the displayed interval for $v_3 = x_6$.
	As the chord $xy = x_3x_4$ is not displayed, but assigned to an almost $z^*_H$-reusable endpoint $b$ of $y = x_4$, we are in subcase (2c) of~\cite{Sch-83}.
	Here it is concluded in~\cite{Sch-83} that the vertex $u$ covering $b$ must be $u = v_k = x_6$ due to the externality of edge $ux$.
	Hence it would suffice to place overlapping displayed intervals for $v_1$ and $v_2$ in an unused portion of the line and add intervals for the inner neighbors of $v_1,v_2$ in the displayed portion for $v_1$, $v_2$, and $v_1v_2$, according to the subset of these vertices that they are adjacent to.
	
	However, in our example we have $u = x_5$, which at least appears only twice so far, but which is an inner neighbor of $v_1$ and $v_2$, and thus has to spend its third interval in the displayed portion of $v_1v_2$.
	We end up with chord $xy$ still assigned to the endpoint $b$ that is covered by $u$, but neither is $ux$ external, nor is $u$ appearing only twice so far.
	In particular, we are not prepared to insert a future vertex adjacent to $x$ and $y$, as we illustrate in Step~7.
	
	\item[Step 7: Case~IIa, $X = \{x_9\}$.]
	
	The chord $xy = x_3x_4$ is not displayed, but assigned to the endpoint $b$ of $x = x_4$, which is covered by $u = x_5$ with $u$ however being internal and appearing three times already.
	Thus, if we apply the proposed modification as illustrated in Figure~\ref{fig:reusable-end}, we split one interval of $u$ into two, causing $f(u)$ to consist of four intervals.
\end{description}

This concludes our example.
Let us remark that the fourth interval for $x_5$ in Step~7 of Figure~\ref{fig:counterexample} might seem superfluous, but is actually needed to give $u$ a displayed portion.
This in turn is necessary as there might be a neighbor $w$ of $x_5$ of degree $1$.
For example, with $w$ inside face $x_5x_6x_7$, it would have been inserted between Steps~5 and 6, according to Case~I with $z_H = x_5$, spending one interval in the displayed portion of $u$.

Also note that in this particular example other ways of assigning endpoints in Step~5 would have allowed the process to continue.
However, the first author together with Daniel Gon\c{c}alves tried some time to obtain new invariants to fix this induction but did not succeed.

%%%%%%%%%%%%%%%%%%%%%%%%%%%%%%%
%%   C O N C L U S I O N S   %%
%%%%%%%%%%%%%%%%%%%%%%%%%%%%%%%

\section{Concluding remarks}\label{sec:conclusion}

As mentioned above, Axenovich~\textit{et al.}~\cite{Axe-13} used some steps of the construction from Section~\ref{sec:new} to prove that every graph $G$ whose edges decompose into $k-1$ forests and another forest of maximum degree~$2$ admits a $k$-interval representation of depth at most~$2$.
Note that if $G$ is a $4$-connected triangulation with outer triangle $\Delta_{\rm out}$, the decomposition of $G - E(\Delta_{\rm out})$ given by Lemma~\ref{lem:inner-decomposition} can be easily extended to a decomposition of all edges in $G$ into two forests and a path, which immediately gives the following.

\begin{theorem}\label{thm:4-connected-case}
	If $G$ is planar and $4$-connected, then $i_2(G) \leq 3$.
\end{theorem}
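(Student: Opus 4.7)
The plan is to reduce the claim to the case of $4$-connected planar triangulations and then combine the decomposition of Lemma~\ref{lem:inner-decomposition} with the Axenovich~\textit{et al.}~\cite{Axe-13} theorem quoted just before.

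First I would embed an arbitrary $4$-connected planar graph $G$ as an induced subgraph of a $4$-connected planar triangulation $G^*$. Fix a planar embedding of $G$ and, inside every face of size at least $4$, insert a new vertex joined to all vertices on the face boundary. The added vertex has degree at least $4$, and attaching a vertex of degree at least $k$ to a $k$-connected graph preserves $k$-connectivity (a cut of size $<k$ either contains the new vertex, in which case it becomes a set of size $\leq k-2$ in $G$ and hence cannot disconnect $G$, or avoids it, in which case it still leaves at least one neighbor of the new vertex available). Thus $4$-connectivity is maintained and, since no edge is ever added between pre-existing vertices, $G$ remains induced in the final triangulation $G^*$. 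Since $i_2$ is monotone under induced subgraphs (restrict any representation to the smaller vertex set), it suffices to prove $i_2(G^*) \leq 3$, so I may assume that $G$ is itself a $4$-connected triangulation with outer triangle $\Delta_{\rm out} = xyz$.

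Next I would enlarge the decomposition of $G - E(\Delta_{\rm out})$ provided by Lemma~\ref{lem:inner-decomposition} into a decomposition of $E(G)$ into one path and two forests. As $F_x$ is a Hamiltonian path of $G \setminus \{y,z\}$ from $x$ to $u_x$, prepending the outer edges $zx$ and $yz$ turns it into a Hamiltonian path $P$ of $G$ of the form $y - z - x - \cdots - u_x$. The remaining outer edge $xy$ is then added to $F_y$: since $x \notin V(F_y)$, the graph $F'_y := F_y \cup \{xy\}$ is still a tree, namely a spanning tree of $G \setminus \{z\}$. Leaving $F_z$ unchanged, we obtain the partition $E(G) = P \,\sqcup\, F'_y \,\sqcup\, F_z$ into one path and two forests, which is precisely the input required by Axenovich~\textit{et al.}~\cite{Axe-13} with $k = 3$.

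That result then directly yields a $3$-interval representation of $G$ of depth at most $2$: lay out $P$ on a fresh real line so that consecutive path vertices overlap and every vertex retains a displayed portion, and then, after rooting each tree of $F'_y$ and of $F_z$ arbitrarily, place for every non-root vertex a new interval inside a dedicated sub-portion of its parent's displayed portion. Each vertex then carries at most three intervals (one from $P$, one per forest in which it is a non-root), and at every point of the real line at most two intervals overlap --- either two consecutive vertices of $P$, or a parent together with one of its children. This is essentially the construction used in the proof of Theorem~\ref{thm:planar-3-intervals}, without the gluing step that produced the extra unit of depth there. The only step requiring a bit of care is the initial reduction to a $4$-connected triangulation; the decomposition extension is then a one-line observation and the depth-$2$ construction is routine.
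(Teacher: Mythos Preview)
Your proposal is correct and follows exactly the route sketched in the paper: extend the decomposition of Lemma~\ref{lem:inner-decomposition} to all of $E(G)$ by absorbing the three outer edges into the path and one of the forests, and then invoke the Axenovich~\textit{et al.}\ result to obtain a depth-$2$ $3$-interval representation. The only addition is that you spell out the reduction from an arbitrary $4$-connected planar graph to a $4$-connected triangulation, which the paper leaves implicit; your stacking argument for this reduction is sound.
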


%\bigskip

% \subsubsection*{One last question}\label{sec:conclude}

% We have exposed an error in the original argument for planar graphs having interval number at most~$3$.
% We also discussed some possibility to fix that approach and why this does not work.
% In particular, the first author together with Daniel Gon\c{c}alves tried some time to repair the invariants of the induction but did not succeed. %It is not clear how to \emph{prove} that no ``minor'' change can fix the problem.
% Our new proof makes no use of the ideas of the previous, flawed proof, and instead relies on a recent decomposition of planar $4$-connected triangulations into two forests and one Hamiltonian path.

Given that the largest interval number among all planar graphs is~$3$, while the largest track number is~$4$, it remains open to determine whether the largest local track number among all planar graphs is~$3$ or~$4$, c.f.,~\cite[Question 19]{Kna-16}. 
% This can be reformulated as the smallest $k$, such that any planar graph has a $k$-interval representation, where any two intervals representing a given vertex lie in different components of the interval graph obtained from looking at all intervals as individual vertices.
% It is known that $3\leq k \leq 4$.
% We have not been able to enforce our proof to give $k=3$ and think that this remains an interesting open problem.
We feel that the gluing of several triangulations along separating triangles is likely to be possible along the lines discussed in~\cite{Kna-16} for planar $3$-trees.
However, finding a $3$-local track representation of a $4$-connected planar triangulation, strengthening Theorem~\ref{thm:4-connected-case}, seems to be more difficult. What is the  largest local track number among all planar graphs?

\paragraph{Acknowledgments.}

We would like to thank Maria Axenovich and Daniel Gon\c{c}alves for fruitful discussions.
We also thank the anonymous reviewers for their comments and suggestions.
Moreover, we thank Ed Scheinerman and Douglas West for their helpful comments on an earlier version of this manuscript and for providing us a copy of Ed's PhD thesis.
The second author was partially supported by ANR projects ANR-16-CE40-0009-01 and ANR-17-CE40-0015 and by the Spanish Ministerio de Economía,
Industria y Competitividad, through grant RYC-2017-22701.

\bibliography{sw}

\begin{thebibliography}{10}

\bibitem{Axe-13}
M.~Axenovich, A.~Beveridge, J.~P. Hutchinson, and D.~B. West.
\newblock Visibility number of directed graphs.
\newblock {\em SIAM J. Discrete Math.}, 27(3):1429--1449, 2013.

\bibitem{Bal-04}
J.~Balogh, P.~Ochem, and A.~Pluh{\'a}r.
\newblock On the interval number of special graphs.
\newblock {\em J. Graph Theory}, 46(4):241--253, 2004.

\bibitem{Bal-99}
J.~Balogh and A.~Pluh{\'a}r.
\newblock A sharp edge bound on the interval number of a graph.
\newblock {\em J. Graph Theory}, 32(2):153--159, 1999.

\bibitem{Cha-09}
J.~Chalopin and D.~Gon{\c{c}}alves.
\newblock Every planar graph is the intersection graph of segments in the
  plane.
\newblock In {\em Proceedings of the forty-first annual ACM symposium on Theory
  of computing}, pages 631--638. ACM, 2009.

\bibitem{Cha-12}
S.~{Chaplick} and T.~{Ueckerdt}.
\newblock Planar graphs as {VPG}-graphs.
\newblock {\em Journal of Graph Algorithms and Applications}, 17(4):475--494,
  2013.

\bibitem{deQ-16}
A.~B. De~Queiroz, V.~Garnero, and P.~Ochem.
\newblock On interval representations of graphs.
\newblock {\em Discrete Appl. Math.}, 202:30--36, 2016.

\bibitem{Gon-07}
D.~Gon{\c{c}}alves.
\newblock Caterpillar arboricity of planar graphs.
\newblock {\em Discrete Math.}, 307(16):2112--2121, 2007.

\bibitem{Gon-09}
D.~Gon{\c{c}}alves and P.~Ochem.
\newblock On star and caterpillar arboricity.
\newblock {\em Discrete Math.}, 309(11):3694--3702, 2009.

\bibitem{Jia-13}
M.~Jiang.
\newblock Recognizing d-interval graphs and d-track interval graphs.
\newblock {\em Algorithmica}, 66(3):541--563, 2013.

\bibitem{Kna-16}
K.~{Knauer} and T.~{Ueckerdt}.
\newblock {Three ways to cover a graph.}
\newblock {\em {Discrete Math.}}, 339(2):745--758, 2016.

\bibitem{Kna-17}
K.~{Knauer} and T.~{Ueckerdt}.
\newblock {Decomposing $4$-connected planar triangulations into two trees and
  one path.}
\newblock {\em {J. Comb. Theory, Ser. B}}, 134:88--109, 2019.

\bibitem{Sch-84}
E.~R. Scheinerman.
\newblock {\em Intersection Classes and Multiple Intersection Parameters of
  Graphs}.
\newblock PhD thesis, Princeton University, 1984.

\bibitem{Sch-83}
E.~R. {Scheinerman} and D.~B. {West}.
\newblock {The interval number of a planar graph: Three intervals suffice.}
\newblock {\em {J. Comb. Theory, Ser. B}}, 35:224--239, 1983.

\bibitem{Stu-15}
P.~Stumpf.
\newblock On covering numbers of different kinds.
\newblock Bachelor thesis, Karls\-ruhe Institute of Technology, August 2015.

\bibitem{Tho-86}
C.~Thomassen.
\newblock Interval representations of planar graphs.
\newblock {\em {J. Comb. Theory, Ser. B}}, 40(1):9--20, 1986.

\bibitem{Tro-79}
W.~T. {Trotter} and F.~{Harary}.
\newblock {On double and multiple interval graphs.}
\newblock {\em {J. Graph Theory}}, 3:205--211, 1979.

\bibitem{Wes-84}
D.~B. {West} and D.~B. {Shmoys}.
\newblock {Recognizing graphs with fixed interval number is NP-complete.}
\newblock {\em {Discrete Appl. Math.}}, 8:295--305, 1984.

\end{thebibliography}
\bibliographystyle{abbrv}

\end{document}